\newcommand{\dbar}{\ensuremath{\overline\partial}}
\newcommand{\sumprime}{\if@display\sideset{}{'}\sum%
            \else\sum'\fi}
\begin{document}

\numberwithin{equation}{section}

\newtheorem{theorem}{Theorem}[section]
\newtheorem{proposition}[theorem]{Proposition}
\newtheorem{conjecture}[theorem]{Conjecture}
\def\theconjecture{\unskip}
\newtheorem{corollary}[theorem]{Corollary}
\newtheorem{lemma}[theorem]{Lemma}
\newtheorem{observation}[theorem]{Observation}
\newtheorem{definition}{Definition}
\numberwithin{definition}{section} 
\newtheorem{remark}{Remark}
\def\theremark{\unskip}
\newtheorem{kl}{Key Lemma}
\def\thekl{\unskip}
\newtheorem{question}{Question}
\def\thequestion{\unskip}
\newtheorem{example}{Example}
\def\theexample{\unskip}
\newtheorem{problem}{Problem}
\newtheorem*{AsA}{\textcolor{red}{Assumption A}}

\address{DEPARTMENT OF MATHEMATICAL SCIENCES, NORWEGIAN UNIVERSITY OF SCIENCE AND TECHNOLOGY, NO-7491 TRONDHEIM, NORWAY}
\email{xu.wang@ntnu.no}

\title[An explicit estimate of the Bergman kernel]{An explicit estimate of the Bergman kernel for positive line bundles}

 \author{Xu Wang}
\date{\today}

\maketitle

{\centering\footnotesize \emph{Dedicated to Bo Berndtsson on occasion of his 70th birthday}\par}

\begin{abstract} We shall give an explicit estimate of the lower bound of the Bergman kernel associated to a positive line bundle. In the compact Riemann surface case, our result can be seen as an explicit version of Tian's partial $C^0$-estimate.
\end{abstract}



\tableofcontents

\section{Introduction}

Let $(L, e^{-\phi})$ be a positive line bundle over an $n$-dimensional complex manifold $X$. Let $m$ be a positive integer. Let $K_X$ be the canonical line bundle over $X$. We call
\begin{equation}\label{eq:K}
{\rm K}_{m\phi}(x):=\sup_{u\in H^0(X, K_X+mL)}  \frac{u(x) \wedge \overline{u(x)}\, e^{-m\phi(x)}}{\int_X   u \wedge \bar u\, e^{-m\phi} },
\end{equation}
the \emph{Bergman kernel forms} and
\begin{equation}\label{eq:B}
{\rm B}_{m\phi}(x) := \sup_{u\in H^0(X, mL)}  \frac{ |u(x)|^2 e^{-m\phi(x)} }{\int_X  |u|^2 e^{-m\phi} \,{\rm MA}_{m\phi} }, \ \ \ \ {\rm MA}_{m\phi}:=\frac{(i\partial\dbar (m\phi))^n}{n!},
\end{equation}
the \emph{Bergman kernel functions}. In \cite{T1} Tian proved that if $X$ is compact then
\begin{equation}\label{eq:T1}
\lim_{m\to \infty} \frac{{\rm K}_{m\phi}}{ {\rm MA}_{m\phi}}= \lim_{m\to \infty} {\rm B}_{m\phi}=\frac{1}{(2\pi)^n}.
\end{equation}
Effective lower bound estimate (with Ricci curvature, diameter and volume assumptions) for ${\rm B}_{m\phi}$ is known as Tian's \emph{partial $C^0$-estimate} \cite{T2}. The first general result is obtained by Donaldson--Sun \cite{DS} using proof by contradiction. Our main results are the followings:

\medskip

\noindent
\textbf{Theorem A.} \emph{Let $(L, e^{-\phi})$ be a positive line bundle over a compact Riemann surface $X$. Put $\omega:={\rm MA}_{\phi}= i\partial\dbar \phi$. Denote by $ {\rm Ric}\,\omega:=i\dbar\partial \log \omega$ the Ricci form of $\omega$. Assume that 
$$ {\rm Ric}\,\omega\leq \omega,  \ \ \ L_0 \geq 2\pi,
$$
where $L_0$ denotes the infimum of the length of closed geodesics in $X$, 
then ${\rm K}_\phi / {\rm MA}_{\phi} \geq \frac{1}{8\pi}$}.

\medskip

\noindent
\textbf{Theorem B.} \emph{Let $(L, e^{-\phi})$ be a positive line bundle over a compact Riemann surface $X$. If 
$$ -\omega/2 \leq {\rm Ric}\,\omega\leq \omega /2,  \ \ \ L_0 \geq  2\pi \, \sqrt 2,
$$
then ${\rm B}_\phi \geq \frac{1}{16\pi}$}.

\medskip

\noindent
\textbf{Remark.} \emph{In case $X=\mathbb P^1$ and $\omega=2\cdot i\partial\dbar \log(1+|z|^2)$ we have
$$
 {\rm Ric}\,\omega =  \omega,  \ \ \ L_0 = 2\pi,
$$
a direct computation gives $L=-K_X$ and ${\rm K}_\phi / {\rm MA}_{\phi} = \frac{1}{4\pi}$. We do not know whether $$
{\rm K}_\phi / {\rm MA}_{\phi} \geq \frac{1}{4\pi}
$$
is always true with the assumptions in Theorem A. On the other hand, Theorem A implies $K_{m\phi}/ {\rm MA}_{m\phi} \geq 1/(8\pi)$ for every positive integer $m$. This is also near optimal since by \eqref{eq:T1}
$$\lim_{m\to \infty} K_{m\phi}/ {\rm MA}_{m\phi} =1/(2\pi).
$$
In case $ {\rm Ric}\,\omega \leq 0$, $L_0/2$ is equal to the injectivity radius. For example if $X=\mathbb C/\Gamma$ is a torus and $\omega=i\partial\dbar (|z|^2/2)$ then ${\rm Ric}\,\omega = 0$ and $
L_0=\inf_{0\neq \gamma \in \Gamma} |\gamma|
$.
}

\medskip

In the first version of this paper, a weaker version of the above theorems is proved using an Ohsawa--Takegoshi type theorem, a variant of the Blocki--Zwonek estimate \cite{BZ} and the isoperimetric inequality. Later we find that one may use the Hessian comparison theorem to simplify the proof and generalize the above theorems to the followings higher dimensional cases.  

\medskip

\noindent
\textbf{Theorem An.} \emph{Let $(L, e^{-\phi})$ be a positive line bundle over an $n$-dimensional compact complex manifold  $X$.  Assume that the sectional curvature of $\omega:=i\partial\dbar \phi$ is bounded above by $1/(4n)$ and $L_0 \geq 2\pi \, \sqrt n$ then $
{\rm K}_{\phi}/  {\rm MA}_{\phi} \geq \frac{1}{2} \,\frac{1}{(4\pi n)^n}$.}

\medskip

\noindent
\textbf{Theorem Bn.}  \emph{Let $(L, e^{-\phi})$ be a positive line bundle over an $n$-dimensional compact manifold  $X$.  Assume that the sectional curvature of $\omega:=i\partial\dbar \phi$ is bounded above by $1/(8n)$, $L_0 \geq 2\pi \, \sqrt{2n}$ and ${\rm Ric} \, \omega \geq -\omega/2$. Then $
{\rm B}_{\phi}\geq \frac{1}{2} \,\frac{1}{(8\pi n)^n}.$}

\medskip

\noindent
\textbf{Remark.} \emph{Since the Ricci curvature is certain sums of sectional curvatures, the curvature assumptions in Theorem Bn also imply a lower bound of the sectional curvature. Hence one may use \cite[Corollary 2.3.2]{HK} to find a lower bound of $L_0$ in terms of the lower bound of the volume and the upper bound of the diameter. Thus, except the upper bound of the sectional curvature, the assumptions in Theorem Bn follow from the standard assumptions in Tian's partial $C^0$-estimate (for results on Tian's partial $C^0$-estimate, see \cite{Bamler, CW1, CW2, J, JWZ, LS, S, T3, WZ, Z}, etc). Our  main contribution is the explicit constant in the estimate. Moreover, our estimate implies that ${\rm B}_{m\phi}\geq \frac{1}{2} \,\frac{1}{(8\pi n)^n}$ for all positive integers $m$. If we understand \cite{DS} correctly, for general positive line bundles, this estimate can not be true with just Ricci curvature, diameter and volume assumptions (see the explanation at the end of \cite{DS}).}

\bigskip

\emph{Acknowledgment}:  It is a pleasure to thank Bo Berndtsson for
stimulating discussions related to the topics of this article. 

\section{Hessian comparison theorem}

\begin{definition}\label{de:bs} Let $X$ be a Riemann manifold.  Denote by $K(V, W)$ the sectional curvature of the tangent plane spanned by $V, W$. Fix $x\in X$, the injectivity radius at $x \in X$ is defined as
$$
{\rm inj}(x):=\sup\{r>0: {\rm exp}_x|_{B(0,r)} \ \text{is diffeomorphism}\}
$$
where ${\rm exp}_x: T_xX \to X$ denotes the exponential map at $x$ and $B(0,r)$ denotes the ball of radius $r$ around $0\in T_xX$. We call $ {\rm inj}_X:=\inf_{x\in X} {\rm inj}(x)$ the  injectivity radius of $X$.
\end{definition}

The following $\partial\dbar$-comparison theorem is a direct consequence of the Hessian comparison theorem (see \cite[Lemma 1.13 in page 14 and Theorem A in page 19]{GWbook}). 

\begin{theorem}\label{th:comparison} Let $X_1, X_2$ be K\"ahler manifolds. Let $\gamma_1:[0,b]\to X_1$ and $ \gamma_2: [0,b]\to X_2$ be unit speed geodesics. With the definition above, suppose that
\begin{equation}\label{eq:comparison1} 
b \leq \min\{ {\rm inj}(\gamma_1(0)), {\rm inj}(\gamma_2(0))\}
\end{equation}
and for all $t \in [0,b]$, $v_1 \bot \dot \gamma_1(t)$ and $v_2 \bot \dot \gamma_2(t)$,
\begin{equation}\label{eq:comparison2} 
K(\dot \gamma_1(t), v_1) \leq K(\dot \gamma_2(t), v_2).
\end{equation}
Let $d_j:=d(\cdot, \gamma_j(0))$ be distance functions. If $f:(0, b) \to \mathbb R$ is smooth and increasing then
\begin{equation}\label{eq:comparison3} 
i\partial\dbar (f\circ d_1)(V_1, V_1) \geq i\partial\dbar (f\circ d_2)(V_2, V_2)
\end{equation}
for all $t\in (0,b)$, $V_j \in T_{\gamma_j(t)} X_j$, $j=1,2$, such that $|V_1|=|V_2|$ and 
$$
(\dot \gamma_1(t), V_1)= (\dot \gamma_2(t), V_2), \ \ \ (\dot \gamma_1(t), J V_1)= (\dot \gamma_2(t), J V_2).
$$
\end{theorem}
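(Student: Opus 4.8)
The plan is to deduce Theorem~\ref{th:comparison} from the classical Hessian comparison theorem by reducing the statement about $i\partial\dbar$ of a radial function to a statement about the real Riemannian Hessian $\nabla^2$. First I would recall that for a smooth function $u$ on a K\"ahler manifold, and a real tangent vector $V$, one has the identity
\begin{equation}\label{eq:ddbar-hess}
i\partial\dbar u\,(V, \overline V) = \tfrac12\left( \nabla^2 u\,(V,V) + \nabla^2 u\,(JV, JV)\right),
\end{equation}
which follows because the $(1,1)$-part of the real Hessian is exactly the complex Hessian, and $J$-invariance of the $(1,1)$-part lets one symmetrize. Writing $V_j$ in terms of its real and imaginary components relative to $\dot\gamma_j$ and applying \eqref{eq:ddbar-hess} to $u = f\circ d_j$ then converts \eqref{eq:comparison3} into the assertion
$$
\nabla^2(f\circ d_1)(V_1,V_1) + \nabla^2(f\circ d_1)(JV_1,JV_1) \;\geq\; \nabla^2(f\circ d_2)(V_2,V_2) + \nabla^2(f\circ d_2)(JV_2,JV_2),
$$
so it suffices to prove the corresponding inequality for each of the two real Hessian terms separately.

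Next I would invoke the Hessian comparison theorem in the form cited from \cite{GWbook}: under the injectivity radius bound \eqref{eq:comparison1} and the sectional curvature bound \eqref{eq:comparison2}, for unit vectors $W_j$ at $\gamma_j(t)$ with matching decomposition into tangential and normal parts relative to $\dot\gamma_j(t)$, one has $\nabla^2 d_1(W_1,W_1) \geq \nabla^2 d_2(W_2,W_2)$. (Here one uses that along a minimizing geodesic $d_j$ is smooth away from the origin, guaranteed precisely by \eqref{eq:comparison1}, and that $\nabla^2 d_j$ in the radial direction vanishes.) Then the chain rule gives
$$
\nabla^2(f\circ d_j)(W,W) = f''(d_j)\,\langle W, \dot\gamma_j\rangle^2 + f'(d_j)\,\nabla^2 d_j\big(W - \langle W,\dot\gamma_j\rangle\dot\gamma_j,\; W - \langle W,\dot\gamma_j\rangle\dot\gamma_j\big),
$$
and here is where the hypotheses on $f$ and on the inner products enter: the matching conditions $(\dot\gamma_1,V_1)=(\dot\gamma_2,V_2)$ and $(\dot\gamma_1,JV_1)=(\dot\gamma_2,JV_2)$ together with $|V_1|=|V_2|$ force the tangential and normal components of $V_j$ (and of $JV_j$, using that $J$ is a parallel isometry) to have matching norms; since $f$ is \emph{increasing}, $f'\geq 0$, so the normal part of the comparison is preserved in the correct direction; and the $f''$ terms are literally equal because $d_1=d_2=t$ along the geodesics and the tangential inner products agree. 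Summing the two contributions from $V_j$ and $JV_j$ yields \eqref{eq:comparison3}.

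The main obstacle I expect is bookkeeping rather than substance: one must be careful that the Hessian comparison theorem as stated requires both the distance functions to be smooth at the relevant points (which \eqref{eq:comparison1} supplies, since $t<b\le\mathrm{inj}(\gamma_j(0))$ keeps us strictly inside the injectivity domain), and that the cited version of the comparison allows an arbitrary direction $W_j$ rather than only a normal one — if it is stated only for normal directions one first splits off the radial part by hand, noting $\nabla^2 d_j(\dot\gamma_j,\cdot)=0$. A second point of care is that $f$ is only assumed smooth and increasing on the open interval $(0,b)$, not at the endpoints, so all computations should be performed at interior $t\in(0,b)$, which is exactly the range in which \eqref{eq:comparison3} is asserted; no regularity of $f\circ d_j$ at the center is needed. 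With these caveats the argument is a direct translation, and I would present it in that order: the Hessian identity \eqref{eq:ddbar-hess}, reduction to real Hessians, application of \cite{GWbook}, the chain rule together with monotonicity of $f$, and finally summation over $V_j$ and $JV_j$.
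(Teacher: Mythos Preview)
Your proposal is correct and follows exactly the route the paper indicates: the paper does not give a detailed argument but simply cites \cite[Lemma 1.13 and Theorem A]{GWbook}, i.e.\ the identity \eqref{eq:ddbar-hess} relating the Levi form to the real Hessian together with the Hessian comparison theorem, and your sketch is precisely the unpacking of that citation. The bookkeeping points you flag (smoothness inside the injectivity radius, splitting off the radial direction, and the role of $f'\ge 0$) are the right ones and are handled correctly.
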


We shall apply the above theorem to  $X_2=\mathbb P^n$ with the Fubini study metric form  $\omega_2= 2\, i \partial\dbar \log(1+|z|^2)$. A direct computation gives 
\begin{equation}\label{eq:FS}
K(V, W) = \frac 14 \left(1+3 (V, JW)^2\right), \ \forall \ V\bot W,  \ \  {\rm inj}(x) = \pi.
\end{equation}
In particular, $K(V, W) =1$ in case $n=1$ and $K(V, W) \geq 1/4$ in case $n\geq 2$. We also need the following distance function formula on $\mathbb C^n \subset \mathbb P^n$
$$
d_2:=d(0, z) = 2 \int_0^{|z|} \frac{dx}{1+x^2}= 2\arctan |z|.
$$
Put
$$
\psi= \log \sin^2(d_2/2)
$$
we have
$$
\psi= \log \frac{\tan^2(d_2/2)}{1+\tan^2(d_2/2)} = \log |z|^2 -\log (1+|z|^2) \leq 0
$$
and
$$
i\partial\dbar \psi \geq -i\partial\dbar \log (1+|z|^2) =-\omega_2/2.
$$
Apply the above theorem to $f(x)=\log \sin^2(x/2)$ and $b=\pi$ we get:

\begin{corollary}\label{co:com} Let $(X_1, \omega_1)$ be an $n$-dimension K\"ahler manifold. Fix $x\in X_1$. Assume that the injectivity radius at $x$ is no less than $\pi$. Put
$$
\psi(z):= 
\begin{cases}
\log \sin^2(d(z,x)/2) &  d(z, x) \leq \pi \\
0 & d(z, x) > \pi. 
\end{cases}
$$
If
$$
\begin{cases}
\text{the sectional curvature of $\omega_1$ is no bigger than $1$,} &  n=1 \\
\text{the sectional curvature of $\omega_1$ is no bigger than $1/4$,} & n\geq 2
\end{cases}
$$
then $i\partial\dbar \psi \geq -\omega_1/2$ on $X$.
\end{corollary}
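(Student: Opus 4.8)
The plan is to deduce Corollary~\ref{co:com} from Theorem~\ref{th:comparison} by comparing $X_1$ with the model space $X_2 = \mathbb{P}^n$ equipped with $\omega_2 = 2\, i\partial\dbar \log(1+|z|^2)$, exactly along the lines already sketched in the paragraph preceding the statement. First I would fix $x = \gamma_1(0) \in X_1$ and an arbitrary point $p \in X_1$ with $0 < d(p,x) < \pi$; set $b$ to be any value with $d(p,x) < b \le \pi$ (or work on the open interval $(0,\pi)$ directly). Let $\gamma_1:[0,b]\to X_1$ be the unit-speed minimizing geodesic from $x$ to $p$, which exists and is unique since $b \le \pi \le {\rm inj}(x)$. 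On the model side, pick any point $o \in \mathbb{P}^n$ (say $o = 0 \in \mathbb{C}^n$) and any unit-speed geodesic $\gamma_2:[0,b]\to\mathbb{P}^n$ from $o$; since ${\rm inj}(o) = \pi \ge b$ by \eqref{eq:FS}, hypothesis \eqref{eq:comparison1} holds. For \eqref{eq:comparison2} I would invoke \eqref{eq:FS}: the sectional curvature of $\omega_2$ satisfies $K(V,W) \ge 1/4$ for $n \ge 2$ and $K(V,W) = 1$ for $n = 1$, so the curvature hypothesis on $\omega_1$ — at most $1$ when $n=1$, at most $1/4$ when $n \ge 2$ — gives precisely $K(\dot\gamma_1(t), v_1) \le K(\dot\gamma_2(t), v_2)$ for all admissible $v_1, v_2$.

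Next I would choose the comparison function. Take $f(x) = \log \sin^2(x/2)$ on $(0,\pi)$; this is smooth and, since $\tfrac{d}{dx}\log\sin^2(x/2) = \cot(x/2) > 0$ on $(0,\pi)$, it is increasing, so Theorem~\ref{th:comparison} applies. With $d_1 = d(\cdot, x)$ on $X_1$ and $d_2 = d(\cdot, o)$ on $\mathbb{P}^n$, the theorem yields, for any $t\in(0,b)$ and any $V_1 \in T_{\gamma_1(t)}X_1$,
\begin{equation*}
i\partial\dbar(f\circ d_1)(V_1,V_1) \ge i\partial\dbar(f\circ d_2)(V_2,V_2),
\end{equation*}
where $V_2 \in T_{\gamma_2(t)}\mathbb{P}^n$ is the vector with $|V_2| = |V_1|$ and the same inner products against $\dot\gamma_2(t)$ and $J\dot\gamma_2(t)$ as $V_1$ has against $\dot\gamma_1(t)$ and $J\dot\gamma_1(t)$ — such a $V_2$ exists because at any point the pair $(\dot\gamma_2, J\dot\gamma_2)$ is orthonormal and we have the remaining $2n-2$ real dimensions free to absorb the orthogonal component. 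Then I would use the explicit model computation done just above the corollary: $f \circ d_2 = \log\sin^2(d_2/2) = \psi_{\mathrm{model}} = \log|z|^2 - \log(1+|z|^2)$ on $\mathbb{C}^n \subset \mathbb{P}^n$, so $i\partial\dbar(f\circ d_2) = i\partial\dbar\log|z|^2 - i\partial\dbar\log(1+|z|^2) \ge -\tfrac12\omega_2$ (the $i\partial\dbar\log|z|^2$ term being a current supported on $\{z=0\}$, hence nonnegative, and vanishing off the geodesic's endpoint region). Evaluating the model inequality on $V_2$ and chaining gives $i\partial\dbar(f\circ d_1)(V_1,V_1) \ge -\tfrac12\omega_2(V_2,V_2) = -\tfrac12|V_2|^2 = -\tfrac12|V_1|^2 = -\tfrac12\omega_1(V_1,V_1)$, since $\omega_1$ and $\omega_2$ are both the metric forms and $|V_1| = |V_2|$. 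As $V_1$ and the point $\gamma_1(t)$ were arbitrary in the punctured ball $\{0 < d(\cdot,x) < \pi\}$, this shows $i\partial\dbar\psi \ge -\omega_1/2$ there.

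It then remains to handle the two loci not covered by the interior comparison: the cut point sphere $\{d(\cdot,x) = \pi\}$ together with the exterior $\{d(\cdot,x) > \pi\}$ where $\psi \equiv 0$, and the center $x$ itself. Away from $\overline{B(x,\pi)}$ the function $\psi$ is identically $0$, so $i\partial\dbar\psi = 0 \ge -\omega_1/2$ trivially. At the center and across $\{d = \pi\}$ the claimed inequality should be read in the sense of currents (distributions): one checks that $\psi$ is globally upper-semicontinuous and bounded above by $0$, that it is $-\infty$ at $x$ (like $\log$ of distance squared, matching the $\log|z|^2$ singularity of the model, which is integrable and plurisubharmonic-type so contributes nonnegatively), and that at the distance-$\pi$ sphere the one-sided Hessian of $\sin^2(d/2)$ from inside vanishes to the right order so no negative singular part is created when gluing to the constant $0$; hence the distributional inequality $i\partial\dbar\psi \ge -\omega_1/2$ extends across both loci. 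I expect this gluing/regularity step — verifying that no bad singular contribution appears at $\{d = \pi\}$ and at $x$, so that the pointwise estimate on the punctured ball upgrades to a global inequality of currents — to be the main technical point; the curvature comparison itself is immediate once the hypotheses are matched against \eqref{eq:FS}.
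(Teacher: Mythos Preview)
Your proposal is correct and follows exactly the paper's approach: apply Theorem~\ref{th:comparison} with the model $(\mathbb{P}^n,\omega_2)$ and $f(x)=\log\sin^2(x/2)$ to obtain $i\partial\dbar\psi\ge-\omega_1/2$ on the punctured ball, then extend across the boundary and the center. For the gluing at $\{d=\pi\}$ the paper's entire argument is the observation that the gradient of $\psi$ vanishes there (since $\frac{d}{dd}\log\sin^2(d/2)=\cot(d/2)\to 0$ as $d\to\pi$), which is the clean replacement for your ``one-sided Hessian'' remark and makes the distributional extension immediate.
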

\begin{proof} Apply the theorem above, we get that $i\partial\dbar \psi \geq -\omega_1/2$ when $d(z, x) < \pi$. Hence the corollary follows since the gradient of $\psi$ vanishes at $d(z, x) = \pi$.
\end{proof}

\section{An Ohsawa--Takegoshi type theorem}  We shall use the following Ohsawa--Takegoshi type theorem \cite{NW}, which is a special case of the main theorem in \cite{GZ}.

\begin{theorem}\label{th:BL}  Let $(L, e^{-\phi})$ be a positive line bundle on an $n$-dimensional compact complex manifold. Fix $x\in X$. Assume that there is a non-positive function $G$ smooth outside $x$ such that
$
G(z)- \log|z-x|^{2n} $
is smooth near $x$ and 
$$
i\partial\dbar \phi  + \lambda \,  i\partial\dbar G  \geq 0
$$
on $X$ for some constant $\lambda >1$. Then ${\rm K}_{\phi}$ in \eqref{eq:K} satisfies
\begin{equation}\label{eq:BL}
{\rm K}_{\phi}(x) \geq  \frac{\lambda-1}{\lambda} \lim_{t\to-\infty}\frac{{\rm MA}_{\phi}(x)}{e^{-t}\int_{G<t} {\rm MA}_{\phi}},
\end{equation}
where ${\rm MA}_{\phi}$ is defined in \eqref{eq:B}.
\end{theorem}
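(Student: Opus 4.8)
The plan is to deduce \eqref{eq:BL} from a weighted $\dbar$-estimate. In fact the statement is exactly the special case of the optimal $L^2$-extension theorem of Guan--Zhou \cite{GZ} (in the form recorded in \cite{NW}) in which the ambient weight is $\phi$, the subvariety is the single point $x$, and the auxiliary plurisubharmonic function with logarithmic pole at $x$ is $G$; so one legitimate route is simply to match notation. Let me instead sketch the underlying argument, since that is what makes the constant $\frac{\lambda-1}{\lambda}$ transparent. By the definition \eqref{eq:K} it suffices to produce, for each $t\ll 0$, a holomorphic section $u_t\in H^0(X,K_X+L)$ with $u_t(x)\wedge\overline{u_t(x)}\,e^{-\phi(x)}={\rm MA}_{\phi}(x)$ and
$$
\int_X u_t\wedge\bar u_t\, e^{-\phi}\ \le\ (1+o(1))\,\frac{\lambda}{\lambda-1}\,e^{-t}\int_{G<t}{\rm MA}_{\phi}\qquad(t\to-\infty).
$$
Indeed, by \eqref{eq:K} the form ${\rm K}_{\phi}(x)$ then dominates the ratio attached to $u_t$, and letting $t\to-\infty$ yields \eqref{eq:BL}.

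To construct $u_t$ I would fix holomorphic coordinates $z$ centred at $x$ and a local holomorphic frame $e_L$ of $L$, and set $u_0:=a\,dz^1\wedge\cdots\wedge dz^n\otimes e_L$ with $a$ chosen so that $u_0(x)\wedge\overline{u_0(x)}\,e^{-\phi(x)}={\rm MA}_{\phi}(x)$. Pick a cut-off $\chi_t$ that is a function of $G$, identically $1$ near $x$ and supported in $\{G<t\}$, and put $v_t:=\chi_t u_0$, so that $\dbar v_t=(\dbar\chi_t)\,u_0$ is an $L$-valued $(n,1)$-form supported in the shell where $\chi_t$ varies. Solve $\dbar w_t=\dbar v_t$ by the H\"ormander--Demailly estimate for $(n,1)$-forms with the plurisubharmonic weight $\phi+\lambda G$ --- admissible precisely because $i\partial\dbar\phi+\lambda\,i\partial\dbar G\ge 0$, after the usual regularisation/exhaustion to handle the pole of $G$ --- obtaining $w_t$ with $\int_X|w_t|^2 e^{-\phi-\lambda G}<\infty$. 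Near $x$ one has $\dbar w_t=0$, so $w_t$ is holomorphic there; since $G-\log|z-x|^{2n}$ is bounded near $x$ and $\lambda>1$, the factor $e^{-\lambda G}$ is non-integrable against any section not vanishing at $x$, which forces $w_t(x)=0$. Hence $u_t:=v_t-w_t\in H^0(X,K_X+L)$ has $u_t(x)=u_0(x)$, i.e.\ the required normalisation.

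The delicate point, which I expect to be the main obstacle, is to bound $\int_X u_t\wedge\bar u_t\, e^{-\phi}$ with the \emph{sharp} constant: a crude Cauchy--Schwarz split of $u_t=v_t-w_t$ loses a multiplicative factor, and recovering $\frac{\lambda-1}{\lambda}$ forces one to couple the profile of $\chi_t$ to $e^{-\lambda G}$ and optimise --- concretely, by running Berndtsson's differentiated (positivity-of-direct-image) form of the $L^2$-estimate, or the Guan--Zhou ODE comparison, and only then letting $t\to-\infty$. In that limit $\{G<t\}$ is, in the coordinates above, essentially the Euclidean ball $\{|z-x|<e^{(t-h_0)/(2n)}\}$ with $h_0:=\lim_{z\to x}(G(z)-\log|z-x|^{2n})$, on which $\phi$ and the frame are frozen at their values at $x$, so the integrals defining $\int|v_t|^2 e^{-\phi}$, $\int|w_t|^2 e^{-\phi}$ and the H\"ormander right-hand side become explicit monomial/Gaussian integrals that assemble into the stated asymptotic bound. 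Granting this optimisation, the displayed inequality holds for every $t\ll 0$, hence \eqref{eq:BL}. I would emphasise that the entire curvature hypothesis is used only to legitimise the weighted $\dbar$-estimate, and that the strict inequality $\lambda>1$ simultaneously guarantees $w_t(x)=0$ and produces the finite gain $\frac{\lambda-1}{\lambda}<1$.
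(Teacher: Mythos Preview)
Your sketch is a valid route, but it is not the one the paper takes. You follow the classical Ohsawa--Takegoshi pattern: build an approximate section $v_t=\chi_t u_0$ supported in $\{G<t\}$, solve $\dbar w_t=\dbar v_t$ with the singular weight $\phi+\lambda G$, use non-integrability of $e^{-\lambda G}$ to force $w_t(x)=0$, and then---as you correctly flag---invoke either the Guan--Zhou ODE comparison or Berndtsson's positivity machinery to recover the sharp factor $\tfrac{\lambda-1}{\lambda}$, since a naive H\"ormander/Cauchy--Schwarz estimate loses it. The paper instead runs the Berndtsson--Lempert argument directly on the Bergman kernels: it introduces the one-parameter family of weights $\phi^t:=\phi+\lambda\max\{G-t,0\}$, notes that the curvature hypothesis makes $\phi^{t+is}$ a metric of non-negative curvature on $\mathbb C_{t+is}\times L$, applies Berndtsson's theorem \cite{Bern09} to conclude that $t\mapsto\log{\rm K}_{\phi^t}(x)$ is convex, computes the asymptotic $\lim_{t\to-\infty}e^t{\rm K}_{\phi^t}(x)=\tfrac{\lambda-1}{\lambda}\lim_{t\to-\infty}\frac{{\rm MA}_\phi(x)}{e^{-t}\int_{G<t}{\rm MA}_\phi}$, and observes that a convex function bounded near $-\infty$ forces $e^t{\rm K}_{\phi^t}(x)$ to be increasing, whence ${\rm K}_\phi(x)={\rm K}_{\phi^0}(x)\ge\lim_{t\to-\infty}e^t{\rm K}_{\phi^t}(x)$.

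What each buys: your approach is the more ``constructive'' one and makes explicit why $\lambda>1$ enters both as the integrability threshold and as the gain factor, but the sharp constant is genuinely deferred to \cite{GZ} or an equivalent optimisation that you do not carry out. The paper's approach never builds a section at all; the optimal constant falls out of a single convexity/monotonicity step once the $t\to-\infty$ asymptotic is computed, which is why the proof fits in a paragraph.
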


\begin{proof} Let us rephrase the proof in \cite{NW}. Our curvature assumption implies that
$$
\phi^{t+is}:=\phi+\lambda \max\{G-t, 0\}
$$
defines a singular metric on $\mathbb C_{t+is} \times L$ with non-negative curvature current. Hence Berndtsson's theorem \cite{Bern09} implies that $\log {\rm K}_{\phi^t}(x)$ is a convex function of $t$. By a direct computation (see the appendix in \cite{NW} or Theorem 3.8 in \cite{BL}) we find that
$$
\lim_{t\to -\infty} e^t {\rm K}_{\phi^t}(x) =  \frac{\lambda-1}{\lambda}  \lim_{t\to-\infty}\frac{{\rm MA}_{\phi}(x)}{e^{-t}\int_{G<t} {\rm MA}_{\phi}}
$$
is finite since $G(z)-\log|z-x|^{2n}$ is smooth near $x$. Hence $e^t {\rm K}_{\phi^t}(x) =e^{t+\log {\rm K}_{\phi^t}(x)}$, as a convex function of $t$ bounded near $-\infty$, must be increasing. Thus
$$
{\rm K}_{\phi}(x) = e^0 {\rm K}_{\phi^0}(x) \geq \lim_{t\to -\infty} e^t {\rm K}_{\phi^t}(x)  =  \frac{\lambda-1}{\lambda}  \lim_{t\to-\infty}\frac{{\rm MA}_{\phi}(x)}{e^{-t}\int_{G<t} {\rm MA}_{\phi}}
$$
gives our estimate.
\end{proof}

\subsection{Proof of Theorem A, B, An, Bn} 

\begin{proof}[Proof of Theorem A, An] By Klingenberg's estimate (see \cite[Corollary 1.2]{Zw}), if the sectional curvature is no bigger than $1/c$ then we have
\begin{equation}\label{eq:inj-k}
\min\{L_0/2,  \sqrt{c}\, \pi \}   \leq {\rm inj}_X \leq L_0/2.
\end{equation}
Hence our assumptions implies that the injectivity radius of $(X, \omega/n)$, $\omega:=i\partial\dbar\phi$, is no less than $\pi$. Thus one may apply  Corollary \ref{co:com} to  $
(X_1, \omega_1)= (X, \omega/n).
$
Put $G=n\psi$. Corollary \ref{co:com} implies that
$$
i\partial\dbar\phi +2\, i\partial\dbar G \geq 0.
$$
Hence Theorem 3.1 ($\lambda=2$) gives Theorem A, An. 
\end{proof}

\begin{proof}[Proof of Theorem B, Bn] By the Ricci curvature assumption, $\phi$ and $i\partial\dbar \phi$ defines a metric on $L-K_X$ with curvature
$$
i\Theta=\omega +{\rm Ric} \,\omega \geq \omega/2.
$$
Thus one may apply  Corollary \ref{co:com} to  $
(X_1, \omega_1)= (X, \omega/(2n)).$ Put $G=n\psi$ then 
$$
i\Theta + 2\, i\partial\dbar G \geq 0.
$$
Apply Theorem 3.1 to $L-K_X$, we get Theorem B, Bn. 
\end{proof}

\section{Another proof of a weaker version of Theorem A, B}

In this section we shall give another proof of Theorem A, B with an extra volume assumption. 

\medskip

\noindent
\textbf{Theorem C.} \emph{Let $(L, e^{-\phi})$ be a positive line bundle over a compact Riemann surface $X$. If
$$ \int_X \omega \geq 8\pi, \ \ \ {\rm Ric}\,\omega\leq \omega,  \ \ \ L_0 \geq 2\pi,
$$
then ${\rm K}_\phi / {\rm MA}_{\phi} \geq \frac{1}{8\pi}$}.

\medskip

\noindent
\textbf{Theorem D.} \emph{Let $(L, e^{-\phi})$ be a positive line bundle over a compact Riemann surface $X$. If 
$$  \int_X \omega \geq 16\pi , \ \ -\omega/2 \leq {\rm Ric}\,\omega\leq \omega /2,  \ \ \ L_0 \geq  2\pi \, \sqrt 2,
$$
then ${\rm B}_\phi \geq \frac{1}{16\pi}$}.

\subsection{The Blocki--Zwonek estimate}

We shall study the right hand side of  \eqref{eq:BL} using a variant of Blocki--Zwonek's estimate \cite[Proof of Theorem 3]{BZ} (see also \cite[section 10]{Blocki2} for related results).

\begin{lemma}\label{le:BZ} With the notation in Theorem \ref{th:BL}. Let $\omega$ be an arbitrary K\"ahler form on  $X$. Then 
$$
\frac{d}{dt} \int_{G<t} \omega_n \geq  \frac{\sigma(G=t)^2}{2\int_{G<t} i\partial\dbar G\wedge \omega_{n-1}}, \ \ \ \  \omega_{q}:=\omega^{q}/q!, 
$$
where 
$$
\sigma(G=t):=\int_{G=t} d\sigma , \ \ \ d\sigma:=\sqrt 2\,\sum \frac{G_{\bar \alpha} \omega^{\bar \alpha \beta}}{|\dbar G|_{\omega}} \frac{\partial}{\partial z^\beta}  \, \rfloor \, \omega_n,
$$
is the measure of the hypersurface $\{G=t\}$ with respect to $\omega$
\end{lemma}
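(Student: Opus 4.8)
The plan is to follow Blocki--Zwonek and combine three ingredients: the coarea formula, Green's formula, and the Cauchy--Schwarz inequality. Write $g$ for the Riemannian metric underlying $\omega$, so that $\omega_n=\omega^n/n!$ is its volume form; recall that on a K\"ahler manifold $|\nabla G|_g=\sqrt2\,|\dbar G|_\omega$ and $i\partial\dbar G\wedge\omega_{n-1}=\mathrm{tr}_\omega(i\partial\dbar G)\,\omega_n=\tfrac12(\Delta_g G)\,\omega_n$, where $\Delta_g$ is the Laplace--Beltrami operator. By Sard's theorem, and because the left-hand side of the asserted inequality is a monotone (hence a.e.\ differentiable) function of $t$, it is enough to prove the estimate for every regular value $t<0$ of $G$: then $\{G=t\}$ is a smooth hypersurface on which $\nabla G$ never vanishes, and one checks directly that the $(2n-1)$-form $d\sigma$ in the statement is exactly the Riemannian surface measure of $\{G=t\}$ --- equivalently, $dG\wedge d\sigma=|\nabla G|_g\,\omega_n$ --- the $\sqrt2$ accounting for the ratio $|\nabla G|_g/|\dbar G|_\omega$.

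First, the coarea formula applied to $G$ with respect to $g$ gives, for such a $t$,
$$
\frac{d}{dt}\int_{G<t}\omega_n=\int_{G=t}\frac{d\sigma}{|\nabla G|_g}.
$$
Next, on the domain $\{G<t\}$ the outward unit normal along $\{G=t\}$ is $\nabla G/|\nabla G|_g$, so the outward normal derivative of $G$ there equals $|\nabla G|_g$, and Green's formula yields
$$
2\int_{G<t}i\partial\dbar G\wedge\omega_{n-1}=\int_{G<t}(\Delta_g G)\,\omega_n=\int_{G=t}|\nabla G|_g\,d\sigma .
$$
In particular this denominator is positive for a regular value $t$, so the right-hand side of the lemma is well defined. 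Finally, writing $1=|\nabla G|_g^{-1/2}\,|\nabla G|_g^{1/2}$ and using Cauchy--Schwarz on $\{G=t\}$,
$$
\sigma(G=t)^2=\Bigl(\int_{G=t}d\sigma\Bigr)^2\le\Bigl(\int_{G=t}\frac{d\sigma}{|\nabla G|_g}\Bigr)\Bigl(\int_{G=t}|\nabla G|_g\,d\sigma\Bigr),
$$
and substituting the two previous identities into the right-hand side gives precisely the claimed inequality.

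The only step that is not routine is Green's formula above, because the logarithmic pole of $G$ at $x$ lies inside $\{G<t\}$. For $n\ge2$ the forms $i\partial\dbar G\wedge\omega_{n-1}$ and $(\Delta_g G)\,\omega_n$ are still in $L^1$ near $x$; applying Green's formula on $\{G<t\}\setminus B_\varepsilon(x)$ and letting $\varepsilon\to0$, the boundary contribution over $\partial B_\varepsilon(x)$ is $O(\varepsilon^{2n-2})\to0$ since $G=\log|z-x|^{2n}+O(1)$ there. For $n=1$ one instead reads $i\partial\dbar G$ as a closed $(1,1)$-current, $i\partial\dbar G=i\partial\dbar(G-\log|z-x|^{2})+2\pi[x]$; the Dirac mass at $x$ is exactly the limit of the $\partial B_\varepsilon(x)$ boundary term, so the displayed identity holds verbatim with the distributional Laplacian. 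I expect this singular-term bookkeeping to be the main obstacle; the remainder is the standard short computation.
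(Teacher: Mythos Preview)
Your proof is correct and follows essentially the same route as the paper: coarea formula, Cauchy--Schwarz on the level set, and Stokes/Green to convert the boundary flux into $\int_{G<t} i\partial\dbar G\wedge\omega_{n-1}$. The paper phrases the coarea step via a vector field $V=\partial_t+\sum G_{\bar\alpha}\omega^{\bar\alpha\beta}|\dbar G|_\omega^{-2}\partial_{z^\beta}$ tangent to $\{G=t\}$ and the identity $\frac{d}{dt}\int_{G<t}\omega_n=\int_{G=t}V\rfloor\omega_n$, but this is the same computation in complex rather than Riemannian notation; your treatment of the logarithmic singularity (the $\varepsilon^{2n-2}$ decay for $n\ge2$ and the Dirac mass for $n=1$) is in fact more explicit than the paper's, which simply invokes Stokes without comment.
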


\begin{proof} Notice that
$$
V:=\frac{\partial}{\partial t} + \sum \frac{G_{\bar \alpha} \omega^{\bar \alpha \beta}}{|\dbar G|^2_{\omega}} \frac{\partial}{\partial z^\beta}
$$
satisfies $V(G-t)=0$, hence it can be used to compute $\frac{d}{dt} \int_{G<t}$, in particular, we have
$$
\frac{d}{dt} \int_{G<t} \omega_n = \int_{G<t} L_{V} \omega_n= \int_{G=t}  V\, \rfloor \, \omega_n = \frac{1}{\sqrt2} \int_{G=t}  \frac{d\sigma}{|\dbar G|_{\omega}}.
$$
Hence the Cauchy--Schwarz inequality gives
$$
\frac{d}{dt} \int_{G<t} \omega_n  \geq \frac{\sigma(G=t)^2}{\sqrt 2\int_{G=t}  |\dbar G|_{\omega} \, d\sigma} = \frac{\sigma(G=t)^2}{2\int_{G=t}  i\dbar G \wedge \omega_{n-1}}=  \frac{\sigma(G=t)^2}{2\int_{G<t}  i\partial\dbar G \wedge \omega_{n-1}},
$$
where we use the Stokes theorem in the last equality.
\end{proof}

Since
$$
G_1 \leq G_2 \Rightarrow  \int_{G_2<t} \omega_n  \leq  \int_{G_1<t} \omega_n, 
$$
in order to get the best estimate from \eqref{eq:BL}, one should choose $G$ to be the following \emph{envelope}, say $g_{\phi, x, \lambda}$, defined by
\begin{equation}\label{eq:envelope}
\sup \{G\leq 0: \textcolor{blue}{\text{$
G(z)- \log|z-x|^{2n} $
is smooth near $x$ and 
$
i\partial\dbar \phi  + \lambda \,  i\partial\dbar G  \geq 0
$}}\}.
\end{equation}
It is known that (see \cite{Dem-sin})
\begin{equation}\label{eq:seshadri}
\epsilon_{x}(L):=\sup\{\lambda\geq 0:  \text{there exists $G\leq 0$ on $X$ such that the \textcolor{blue}{blue} part in \eqref{eq:envelope} holds}\}
\end{equation}
is equal to the \emph{Seshadri constant} up to a constant factor $n$. If $0<\lambda <\epsilon_{x}(L)$ then $g_{\phi, x, \lambda}$, as an envelope, must satisfy
\begin{equation}\label{eq:envelope1}
(i\partial\dbar \phi  + \lambda \,  i\partial\dbar g_{\phi, x, \lambda})^n =  (2\pi n \lambda)^n \delta_x
\end{equation}
on $\{g_{\phi, x, \lambda} <0\}$, where $\delta_x$ is the Dirac measure defined by $\int_{X}f \, \delta_x =f(x)$.
Thus if we choose $G=g_{\phi, x, \lambda}$ and $\omega =i\partial\dbar \phi$ then
\begin{align}
 \int_{G<t}  i\partial\dbar G \wedge \omega_{n-1} & = \int_{G<t}  (i\partial\dbar G + \omega /\lambda - \omega /\lambda) \wedge \omega_{n-1} \\
\label{eq:b-n1} & = \int_{G<t}  (i\partial\dbar G + \omega /\lambda) \wedge \omega_{n-1} -\frac{n}{\lambda} \int_{G<t}\omega_{n} \\
\label{eq:envelope2} & \leq \int_{G<0}  (i\partial\dbar G + \omega /\lambda) \wedge \omega_{n-1} -\frac{n}{\lambda} \int_{G<t}\omega_{n}  \\
\label{eq:b-n2} & =  \frac{n}{\lambda} \left( \int_{G<0}\omega_{n} - \int_{G<t}\omega_{n} \right),
\end{align}
where we use $\int_{G<0}  i\partial\dbar G \wedge \omega_{n-1} =0$ (since $G=0$ outside $\{G<0\}$) in \eqref{eq:b-n2}. In case $n=1$, \eqref{eq:envelope1} directly gives
\begin{equation}\label{eq:envelope3}
\int_{G<t}  i\partial\dbar G = \int_{G<t}  i\partial\dbar G + \omega /\lambda - \omega /\lambda = 2\pi - \int_{G<t}  \omega /\lambda .
\end{equation}
Hence Lemma \ref{le:BZ} implies
\begin{equation}\label{eq:envelope4}
\frac{d}{dt} \int_{G<t} \omega \geq  \frac{\sigma(G=t)^2}{4\pi - \frac2{\lambda} \int_{G<t} \omega }.
\end{equation}

\subsection{Isoperimetric inequality}

We shall use the following result (see inequality (5.4) in \cite[Proposition 5.2]{MJ}).

\begin{lemma}[Isoperimetric inequality]\label{le:iso}  Let $U$ be an open subset of a compact Riemann surface $(X, \omega)$. Assume that
$$
A:=\int_U \omega \leq \frac12  \int_X \omega, \ \ {\rm Ric}\, \omega \leq k\, \omega.
$$ 
Then 
\begin{equation}\label{eq:iso}
\sigma(\partial U)^2 \geq \min\{L_0^2, A(4\pi -k A)\},
\end{equation}
where $L_0$ denotes the infimum of the length of simple closed geodesics in $X$.
\end{lemma}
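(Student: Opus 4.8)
The plan is to prove a Bol--Fiala type isoperimetric inequality by running a comparison argument on the inner parallel sets of $U$ and invoking the Gauss--Bonnet theorem. I would first observe that on a Riemann surface ${\rm Ric}\,\omega=\kappa\,\omega$, with $\kappa$ the Gaussian curvature, so the hypothesis ${\rm Ric}\,\omega\le k\,\omega$ is just $\kappa\le k$ pointwise; and, approximating $U$ from inside by open sets with smooth boundary (for instance sublevel sets $\{G<t\}$ at regular values of $G$), I may assume $\partial U$ is a smooth compact curve, since $\int_U\omega$ and $\sigma(\partial U)$ vary continuously under such approximations.

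Next, for $t\ge0$ set $U_t:=\{x\in U:\ d(x,\partial U)>t\}$, $A(t):=\int_{U_t}\omega$, and $L(t):=\sigma(\partial U_t)$, the $\omega$-length of the level set $\{d(\cdot,\partial U)=t\}$. By the coarea formula, $A$ is Lipschitz and non-increasing, $A(0)=A$, $A(t)\to0$ at some finite time $T$, and $-A'(t)=L(t)$ for a.e.\ $t$. Applying the first variation of arc length to the inward unit-normal flow of the smooth part of the level sets, together with the Gauss--Bonnet theorem on $U_t$ and the bound $\kappa\le k$, I would obtain, in the distributional sense on $[0,T)$,
$$
L'(t)\ \le\ k\,A(t)-2\pi\,\chi(U_t),
$$
the cut locus of $\partial U$ contributing only a non-positive singular part.

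In the principal case — when (for a.e.\ $t\in[0,T)$) every component of $U_t$ is a topological disc, so that $\chi(U_t)\ge1$ and hence $L'(t)\le kA(t)-2\pi$ — I would then use $L(t)=-A'(t)$ to write
$$
\frac{d}{dt}\,L(t)^2\ =\ 2L(t)L'(t)\ \le\ 2\big(kA(t)-2\pi\big)\big(-A'(t)\big)\ =\ \frac{d}{dt}\big(4\pi A(t)-kA(t)^2\big),
$$
so that $t\mapsto L(t)^2-\big(4\pi A(t)-kA(t)^2\big)$ is non-increasing on $[0,T)$ (the cut-locus jumps of $L$ being downward); letting $t\to T$, where $A(t)\to0$ while $L(t)^2\ge0$, this yields $\sigma(\partial U)^2=L(0)^2\ge 4\pi A-kA^2=A(4\pi-kA)$, which is even stronger than claimed.

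It remains to treat the case where some inner parallel set $U_{t_0}$ has a component that is not a disc, which is where the term $L_0^2$ and the hypothesis $A\le\tfrac12\int_X\omega$ come in. Such a component is a compact surface with boundary of non-positive Euler characteristic; using $A\le\tfrac12\int_X\omega$ to exclude the degenerate situation in which all its boundary curves bound discs on their far sides, one locates in its boundary a simple closed curve that is non-contractible in $X$, hence freely homotopic to a simple closed geodesic and of $\sigma$-length $\ge L_0$. Propagating this bound back to $\partial U$ across the intervening collar, via the monotonicity of the parallel flow (again with $A\le\tfrac12\int_X\omega$ ensuring this collar sits on the side where the length comparison has the correct sign), gives $\sigma(\partial U)\ge L_0$. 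Combining the two cases yields $\sigma(\partial U)^2\ge\min\{L_0^2,\,A(4\pi-kA)\}$. I expect this last step — the topological bookkeeping for the parallel domains, the passage to the systole $L_0$, and the handling of the cut locus — to be the main obstacle, and it is precisely what \cite[Proposition~5.2]{MJ} is designed to carry out.
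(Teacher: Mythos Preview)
The paper does not prove this lemma: it is simply quoted from \cite[Proposition~5.2]{MJ}, and the \texttt{proof} environment that immediately follows the lemma in the paper is actually the proof of Theorem~C (it uses the lemma as input and concludes with $K_\phi\ge\omega/(8\pi)$), not a proof of the isoperimetric inequality itself.

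Your sketch is the standard Bol--Fiala inner-parallel argument combined with Gauss--Bonnet, which is precisely what Morgan--Johnson implement. The differential inequality $L'(t)\le kA(t)-2\pi\chi(U_t)$ and the computation in the disc case are correct, and you correctly flag the delicate part --- locating a non-contractible boundary component to invoke the systole $L_0$, using the half-area hypothesis, and controlling the cut locus --- as exactly the content of \cite[Proposition~5.2]{MJ}. So there is nothing to compare against: you have supplied more detail than the paper does, and your outline is faithful to the cited source.
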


\begin{proof} By the definition of the Seshadri constant in \eqref{eq:seshadri}, we know that in case $n=1$,  the Hodge decomposition gives
\begin{equation}\label{eq:seshadri1}
\epsilon_x = {\rm deg} (L):=\int_X c_1(L) = \frac{1}{2\pi}\int_X \omega.
\end{equation}
Hence if $\int_X \omega \geq 8
\pi$ then $\epsilon_x \geq 4$. Hence we can take
$\lambda=2$ in \eqref{eq:envelope4}, which gives
\begin{equation}\label{eq:envelope5}
\frac{d}{dt} \int_{G<t} \omega \geq  \frac{\sigma(G=t)^2}{4\pi - \int_{G<t} \omega }.
\end{equation}
Now, since $\ {\rm Ric}\,\omega \leq \omega$, $L_0^2 \geq 4\pi^2$ and
$$
\int_{G<t} \omega \leq \int_{G<0} \omega =2\pi \lambda =4\pi \leq \frac12 \int_X \omega,
$$
one may apply the above lemma to $U=\{G<t\}$, then by \eqref{eq:iso} we have
$$
\sigma(G=t)^2 \geq \min\{4\pi^2, A (4\pi-A)\}= A(4\pi -A),   \ \ \ A:=\int_{G<t} \omega.
$$
Thus \eqref{eq:envelope5} gives
$$
\frac{d}{dt} \int_{G<t} \omega \geq  \int_{G<t} \omega,
$$
which implies that $e^{-t} \int_{G<t} \omega$ is increasing with respect to $t<0$. Hence
$$
\lim_{t\to -\infty} e^{-t} \int_{G<t} \omega \leq e^{-0} \int_{G<0} \omega = 2\pi \lambda =4\pi.
$$
Then by  \eqref{eq:BL} we have $K_{\phi} \geq \frac{\omega}{8\pi}$. 
\end{proof}

Theorem D follows by a similar argument (see the difference between the proof of Theorem A, B above).

\end{document}